\theoremstyle{plain}
\theoremstyle{definition}
\newtheorem{theorem}{Theorem}[section]
\newtheorem{lemma}[theorem]{Lemma}
\newtheorem{corollary}[theorem]{Corollary}
\newtheorem{note}[theorem]{Note}
\newtheorem{remark}[theorem]{Remark}
\theoremstyle{remark}
\numberwithin{equation}{section}
\title{Generalized shifts through derivations' concept in $\ell^p(\tau)$ spaces}
\author[S. Arzanesh, F. Ayatollah Zadeh Shirazi, A. Hosseini]{Safoura Arzanesh, Fatemah Ayatollah Zadeh Shirazi, Arezoo Hosseini}
\begin{document}
\begin{abstract}
In the following text for $p\in[1,\infty]$, nonzero cardinal number $\tau$, self--map $\varphi:\tau\to\tau$ if
there exists $N\in\mathbb{N}$ such that $\varphi^{-1}(\alpha)$ has at most $N$ elements for each $\alpha<\tau$,
and operators $\psi,\lambda:\ell^p\tau)\to\ell^p(\tau)$ we prove the generalized shift
$\mathop{\sigma_\varphi\restriction_{\ell^p(\tau)}:\ell^p(\tau)\to\ell^p(\tau)\:\:\:\:\:\:\:\:\:}\limits_{\:\:\:\:\:\:\:\:\:
(x_\alpha)_{\alpha<\tau}\mapsto
(x_{\varphi(\alpha)})_{\alpha<\tau}}$:
\begin{itemize}
\item is a $(\psi,\lambda)-$derivation if and only if there exists $\mathsf{r}\in{\mathbb C}^\tau$ with
\linebreak
	$\psi={\mathsf r}\sigma_\varphi\restriction_{\ell^p(\tau)}$ and 
	$\lambda=((1)_{\alpha<\tau}-{\mathsf r})\sigma_\varphi\restriction_{\ell^p(\tau)}$,
\item is a $\psi-$derivation if and only if $\psi=\frac12\sigma_\varphi\restriction_{\ell^p(\tau)}$,
\item is not a (Jordan, Jordan triple) derivation.
\item is a generalized (Jordan, Jordan triple)  derivation if and only if $\varphi=id_\tau$.
\end{itemize}
\end{abstract}
\maketitle
\noindent {\small {\bf 2010 Mathematics Subject Classification:}  47B47
\\
{\bf Keywords:}}  Banach algebra, (generalized) derivation, 
generalized shift, 
Jordan derivation, $(\psi,\lambda)-$derivation.
\section{Introduction}
\noindent Just glancing through scientific research engines, one may find a huge amount
of articles in each of the folowing subjects: ``shift theory'' and ``derivations on Banach algebras''.
Our aim in the following text is to have a start point for ``generalized shifts'' in relation with
``derivations'' over $\ell^q(\tau)$ spaces.
\\
Our consideration is in the following sense: for nonzero cardinal number $\tau$,
self--map $\varphi:\tau\to\tau$ consider generalized shift $\sigma_\varphi\restriction_{\ell^q(\tau)}$
over Banach algebra $\ell^q(\tau)$ where $q\in[1,\infty]$ and  $\sigma_\varphi(\ell^q(\tau))\subseteq\ell^q(\tau)$
the following questions arise:
\begin{itemize}
\item Is $\sigma_\varphi\restriction_{\ell^q(\tau)}:\ell^q(\tau)\to\ell^q(\tau)$ is a derivation?
\item Is there any  operator $\psi$ such that $\sigma_\varphi\restriction_{\ell^q(\tau)}:\ell^q(\tau)\to\ell^q(\tau)$ is a $\psi-$derivation?
\item Are there any operator $\psi,\lambda$ such that $\sigma_\varphi\restriction_{\ell^q(\tau)}:\ell^q(\tau)\to\ell^q(\tau)$ is a $(\psi,\lambda)-$derivation?
\item Is $\sigma_\varphi\restriction_{\ell^q(\tau)}:\ell^q(\tau)\to\ell^q(\tau)$ is a generalized derivation?
\item Is there any operator $\psi$ such that $\sigma_\varphi\restriction_{\ell^q(\tau)}:\ell^q(\tau)\to\ell^q(\tau)$ is a generalized $\psi-$derivation?
\item Are there any operator $\psi,\lambda$ such that $\sigma_\varphi\restriction_{\ell^q(\tau)}:\ell^q(\tau)\to\ell^q(\tau)$ is a generalized $(\psi,\lambda)-$derivation?
\end{itemize}
and other related questions.
\\
In the following subsections we bring preliminaries on Banach algebras $\ell^q(\tau)$ and generalized shifts.
Required notes on various definitions of derivation are appeared at the beginning of each section. Most 
authors refer to \cite{sakai} as one of their main references to introduce the subject, we may also
refer the interested reader
to~\cite{survey, kh, nik} for historical background, collections of generalizations, different areas involving with derivations.
\subsection{Background on Banach algebras $\ell^q(\tau)$}
For nonzero cardinal number $\tau$, $x=(x_\alpha)_{\alpha<\tau}\in{\mathbb C}^\tau$ and $q\in(0,\infty]$ let:
\[||x||_q:=\left\{\begin{array}{lc} \left(\mathop{\sum}\limits_{\alpha<\tau}|x_\alpha|^q\right)^\frac1q & q\in(0,\infty) \\
& \\
||x||_\infty=\sup\{|x_\alpha|:\alpha<\tau\} & q=\infty \end{array}\right.\]
and
$\ell^q(\tau)=\{y\in{\mathbb C}^\tau:||y||_q<\infty\}$. For $z=(z_\alpha)_{\alpha<\tau},y=(y_\alpha)_{\alpha<\tau}
\in{\mathbb C}^\tau$ and $p\in(0,\infty)$ we have (for details on $\ell^p(\tau)$ spaces see \cite{folland}):
\begin{eqnarray*}
||zy||_p^p & = & ||(|z_\alpha y_\alpha|^p)_{\alpha<\tau}||_1=||(|z_\alpha|^p)_{\alpha<\tau} (|y_\alpha|^p)_{\alpha<\tau}||_1 \\
& \leq & ||(|z_\alpha|^p)_{\alpha<\tau}||_\infty||(|y_\alpha|^p)_{\alpha<\tau}||_1\:\:(Holder's\: inequality) \\
& = &  ||(|z_\alpha|^p)_{\alpha<\tau}||_\infty ||y||_p^p\leq ||(|z_\alpha|^p)_{\alpha<\tau}||_p ||y||_p^p=||z||_p^p||y||_p^p
\end{eqnarray*}
Hence $||zy||_p\leq||z||_p||y||_p$ also it is evident that $||zy||_\infty\leq||z||_\infty||y||_\infty$.
Note that for $q\in[1,\infty]$, 
$\ell^q(\tau)$ equipped with norm $||\:||_q$, pointwise sum and pointwise product is a Banach algebra.
\subsection{Backgroud on generalized shifts}
For arbitrary set $X$ with at least two elements,
nonempty set $\Gamma$, and self--map $\varphi:\Gamma\to\Gamma$ we call 
$\sigma_\varphi:\mathop{X^\Gamma\to X^\Gamma}\limits_{(x_\alpha)_{\alpha\in\Gamma}\mapsto
(x_{\varphi(\alpha)})_{\alpha\in\Gamma}}$
a generalized shift, which is a generalization of one--sided shift $\mathop{\{1,\ldots,k\}^{\mathbb N}
\to \{1,\ldots,k\}^{\mathbb N}}\limits_{(x_n)_{n\in\mathbb{N}}\mapsto(x_{n+1})_{n\in\mathbb{N}}}$
and two--sided shift $\mathop{\{1,\ldots,k\}^{\mathbb Z}
\to \{1,\ldots,k\}^{\mathbb Z}}\limits_{(x_n)_{n\in\mathbb{Z}}\mapsto(x_{n+1})_{n\in\mathbb{Z}}}$.
Generalized shifts have been introduced for the first
time in \cite{note}, one may find texts in different areas like dynamical systems~\cite{dev, taher},
group theory~\cite{anna}, functional analysis~\cite{compact}, etc. all in generalized shifts' approach.
\\
For nonzero cardinal number $\tau$, self--map $\varphi:\tau\to\tau$ and $q\in[1,\infty]$ we have 
$\ell^q(\tau)\subseteq{\mathbb C}^\tau$ so one may consider $\sigma_{\varphi}\restriction_{\ell^q(\tau)}$.
\subsection{Convention}
In the following text suppose $\tau$ is a nonzero cardinal number, $p\in[1,\infty]$
and $\varphi:\tau\to\tau$ is arbitrary with $\sigma_\varphi(\ell^p(\tau))\subseteq\ell^p(\tau)$. For $E\subseteq \tau$ consider $\chi_E:\tau\to\tau$ with
\[\chi_E(\alpha)=\left\{\begin{array}{lc} 1 & \alpha\in E \\ 0 & {\rm otherwise} \end{array}\right.\]
and ${\mathsf w}^E=(\chi_E(\alpha))_{\alpha<\tau}$. For $\alpha<\tau$ we denote
$\mathsf{w}^{\{\beta\}}$ by ${\mathsf w}^\beta$.
\\
By an operator we mean a continuous linear operator. 
\\
For arbitrary set $X$, $id_X:\mathop{X\to X}\limits_{x\mapsto x}$ is the identity map on $X$.
\begin{remark}
For $p\in[1,\infty)$ the following statements are equivalent~\cite{compact}:
\begin{itemize}
\item $\sigma_\varphi(\ell^p(\tau))\subseteq\ell^p(\tau)$,
\item $\sigma_\varphi(\ell^p(\tau))\subseteq\ell^p(\tau)$ and $\sigma_\varphi\restriction_{\ell^p(\tau)}:\ell^p(\tau))\to\ell^p(\tau)$ is continuous, 
\item there exists $N\in{\mathbb N}$ such that for all $\alpha<\tau$, 
$\varphi^{-1}(\alpha)$ has at most $N$ elements.
\end{itemize}
Moreover, $\sigma_\varphi(\ell^\infty(\tau))\subseteq\ell^\infty(\tau)$ and $\sigma_\varphi\restriction_{\ell^\infty(\tau)}:\ell^\infty(\tau)\to\ell^\infty(\tau)$ is continuous.
\end{remark}
\section{Is $\sigma_\varphi\restriction_{\ell^p(\tau)}$ a $(\psi,\lambda)-$derivation?}
\noindent Let's recall, in Banach algebra $\mathcal A$ \cite{meng}: 
\begin{itemize}
\item for linear mappings $d,\psi,\lambda:\mathcal{A}\to\mathcal{A}$ we say
	$d$ is a $(\psi,\lambda)-$derivation if $d(ab)=d(a)\psi(b)+\lambda(a)d(b)$ for all $a,b\in\mathcal{A}$,
\item for linear mappings $d,\psi:\mathcal{A}\to\mathcal{A}$ we say
	$d$ is a $\psi-$derivation if it is a $(\psi,\psi)-$derivation, i.e.
	$d(ab)=d(a)\psi(b)+\psi(a)d(b)$ for all $a,b\in\mathcal{A}$,
\item we say linear mapping $d:\mathcal{A}\to\mathcal{A}$
	is a derivation if it is an $id_{\mathcal A}-$derivation, i.e.
	$d(ab)=d(a)b+ad(b)$ for all $a,b\in\mathcal{A}$,
\item we say linear mapping $d:\mathcal{A}\to\mathcal{A}$
	is a Jordan derivation if 
	$d(a^2)=d(a)a+ad(a)$ for all $a\in\mathcal{A}$, thus any derivation is a Jordan derivation,
\item we say linear mapping $d:{\mathcal A}\to{\mathcal A}$ is a
	Jordan triple derivation if $d(aba)=d(a)ba+ad(b)a+abd(a)$ for all $a,b\in\mathcal{A}$,
\end{itemize}
In this section we prove that for operators $\psi,\lambda:\ell^p(\tau)\to\ell^p(\tau)$, the generalized shift 
$\sigma_\varphi\restriction_{\ell^p(\tau)}:\ell^p(\tau)\to\ell^p(\tau)$:
\begin{itemize}
\item is a $(\psi,\lambda)-$derivation if and only if 
there exists $\mathsf{r}\in{\mathbb C}^\tau$ with
	$\psi={\mathsf r}\sigma_\varphi\restriction_{\ell^q(\tau)}$ and 
	$\lambda=((1)_{\alpha<\tau}-{\mathsf r})\sigma_\varphi\restriction_{\ell^q(\tau)}$,
\item is a $\psi-$derivation if and only if $\psi=\frac12\sigma_\varphi\restriction_{\ell^q(\tau)}$,
\item is not a (Jordan, Jordan triple) derivation.
\end{itemize}
Moreover if operator $d:\ell^p(\tau)\to\ell^p(\tau)$ is a $\sigma_\varphi\restriction_{\ell^p(\tau)}-$derivation, then $d=0$
\begin{lemma}\label{amin10}
Consider linear mappings $\psi,\lambda$,
if $\sigma_\varphi\restriction_{\ell^p(\tau)}:\ell^p(\tau)\to\ell^p(\tau)$ is a  
$(\psi,\lambda)-$derivation, then for all $\alpha,\beta<\tau$ we have
\[\left\{\begin{array}{lc} \pi_\alpha(\psi({\mathsf w}^\beta))=\pi_\alpha(\lambda({\mathsf w}^\beta))=0 & 
\varphi(\alpha)\neq\beta \\
\pi_\alpha(\psi({\mathsf w}^\beta))+\pi_\alpha(\lambda({\mathsf w}^\beta))=1 & \varphi(\alpha)=\beta \end{array}\right.\]
i.e., for ${\mathsf r}=(\pi_\alpha(\psi({\mathsf w}^{\varphi(\alpha)})))_{\alpha<\tau}$ we have
$\psi({\mathsf w}^\beta)=\mathsf{r}{\mathsf w}^{\varphi^{-1}(\beta)}$ and 
$\lambda({\mathsf w}^\beta)=((1)_{\alpha<\tau}-\mathsf{r}){\mathsf w}^{\varphi^{-1}(\beta)}$ 
(for all $\beta<\tau$).
\end{lemma}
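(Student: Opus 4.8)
The plan is to extract the stated relations by feeding the $(\psi,\lambda)$-derivation identity
$\sigma_\varphi\restriction_{\ell^p(\tau)}(xy)=\sigma_\varphi\restriction_{\ell^p(\tau)}(x)\,\psi(y)+\lambda(x)\,\sigma_\varphi\restriction_{\ell^p(\tau)}(y)$
with the simplest possible inputs, namely the basis vectors $\mathsf{w}^\beta$. First I would compute $\sigma_\varphi\restriction_{\ell^p(\tau)}(\mathsf{w}^\beta)$ directly: since $\sigma_\varphi(\mathsf{w}^\beta)=(\chi_{\{\beta\}}(\varphi(\alpha)))_{\alpha<\tau}=\mathsf{w}^{\varphi^{-1}(\beta)}$, which indeed lies in $\ell^p(\tau)$ by the Convention and the Remark (the fibers $\varphi^{-1}(\beta)$ are finite). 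I would record the pointwise multiplication facts $\mathsf{w}^\beta\mathsf{w}^\gamma=\mathsf{w}^{\beta\cap\gamma}$ — that is, $\mathsf{w}^\beta\mathsf{w}^\gamma=\mathsf{w}^\beta$ if $\beta=\gamma$ and $0$ otherwise — and note $\pi_\alpha(\mathsf{w}^E)=\chi_E(\alpha)$.

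Next I would split into the two cases. Taking $x=\mathsf{w}^\beta$, $y=\mathsf{w}^\gamma$ with $\beta\neq\gamma$, the left side is $\sigma_\varphi\restriction_{\ell^p(\tau)}(0)=0$, so $\sigma_\varphi\restriction_{\ell^p(\tau)}(\mathsf{w}^\beta)\,\psi(\mathsf{w}^\gamma)=-\lambda(\mathsf{w}^\beta)\,\sigma_\varphi\restriction_{\ell^p(\tau)}(\mathsf{w}^\gamma)$, i.e.
$\mathsf{w}^{\varphi^{-1}(\beta)}\psi(\mathsf{w}^\gamma)=-\lambda(\mathsf{w}^\beta)\mathsf{w}^{\varphi^{-1}(\gamma)}$.
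Applying $\pi_\alpha$ and using that the supports $\varphi^{-1}(\beta)$ and $\varphi^{-1}(\gamma)$ are disjoint, the coordinate $\pi_\alpha$ of the left side vanishes whenever $\varphi(\alpha)\neq\beta$ and the right side vanishes whenever $\varphi(\alpha)\neq\gamma$; since $\beta\neq\gamma$ one of these always applies, forcing both sides to be $0$ in every coordinate. This yields $\pi_\alpha(\psi(\mathsf{w}^\gamma))=0$ whenever $\varphi(\alpha)=\beta\neq\gamma$ (varying $\beta$, this is exactly ``$\varphi(\alpha)\neq\gamma$'') and symmetrically $\pi_\alpha(\lambda(\mathsf{w}^\beta))=0$ whenever $\varphi(\alpha)\neq\beta$. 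For the diagonal case I would put $x=y=\mathsf{w}^\beta$: the left side is $\sigma_\varphi\restriction_{\ell^p(\tau)}(\mathsf{w}^\beta)=\mathsf{w}^{\varphi^{-1}(\beta)}$, the right side is $\mathsf{w}^{\varphi^{-1}(\beta)}\psi(\mathsf{w}^\beta)+\lambda(\mathsf{w}^\beta)\mathsf{w}^{\varphi^{-1}(\beta)}$, and taking $\pi_\alpha$ for $\alpha$ with $\varphi(\alpha)=\beta$ gives $1=\pi_\alpha(\psi(\mathsf{w}^\beta))+\pi_\alpha(\lambda(\mathsf{w}^\beta))$.

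Finally I would assemble the ``i.e.'' reformulation. Define $\mathsf{r}=(\pi_\alpha(\psi(\mathsf{w}^{\varphi(\alpha)})))_{\alpha<\tau}\in\mathbb{C}^\tau$. For a fixed $\beta$ and any $\alpha$: if $\varphi(\alpha)\neq\beta$ then both $\pi_\alpha(\psi(\mathsf{w}^\beta))$ and $\pi_\alpha(\mathsf{r}\mathsf{w}^{\varphi^{-1}(\beta)})=\pi_\alpha(\mathsf{r})\chi_{\varphi^{-1}(\beta)}(\alpha)$ vanish; if $\varphi(\alpha)=\beta$ then $\pi_\alpha(\mathsf{r}\mathsf{w}^{\varphi^{-1}(\beta)})=\pi_\alpha(\mathsf{r})=\pi_\alpha(\psi(\mathsf{w}^{\varphi(\alpha)}))=\pi_\alpha(\psi(\mathsf{w}^\beta))$, so $\psi(\mathsf{w}^\beta)=\mathsf{r}\mathsf{w}^{\varphi^{-1}(\beta)}$; then $\lambda(\mathsf{w}^\beta)=((1)_{\alpha<\tau}-\mathsf{r})\mathsf{w}^{\varphi^{-1}(\beta)}$ follows coordinatewise from the two displayed relations. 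I do not anticipate a serious obstacle here — the argument is a direct coordinate computation — but the one point needing a little care is the disjoint-support bookkeeping in the off-diagonal case, making sure that ``for every $\beta\neq\gamma$'' really does sweep out the full condition ``$\varphi(\alpha)\neq\gamma$'' (this uses that $\alpha$ has a well-defined image $\varphi(\alpha)$, so one may take $\beta=\varphi(\alpha)$).
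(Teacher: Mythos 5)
Your proof is correct and follows essentially the same route as the paper: it uses the vanishing products $\mathsf{w}^\beta\mathsf{w}^\gamma=0$ for $\beta\neq\gamma$ to kill the off--diagonal coordinates and the idempotency $(\mathsf{w}^\beta)^2=\mathsf{w}^\beta$ to obtain the sum--to--one relation on $\varphi^{-1}(\beta)$. Your explicit coordinatewise verification of the ``i.e.''\ reformulation in terms of $\mathsf{r}$ is a small addition the paper leaves implicit, but it is routine and matches the intended argument.
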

\begin{proof}
Suppose $\sigma_\varphi\restriction_{\ell^p(\tau)}:\ell^p(\tau)\to\ell^p(\tau)$ is a $(\psi,\lambda)-$derivation.
Choose distinct $\theta,\mu<\tau$,
then we have (use $\mathsf{w}^\theta\mathsf{w}^\mu=(0)_{\alpha<\tau}$):   
\begin{eqnarray*}
(0)_{\alpha<\tau} & = & \sigma_\varphi((0)_{\alpha<\tau}) = \sigma_\varphi(\mathsf{w}^\theta\mathsf{w}^\mu) \\
& = & \sigma_\varphi(\mathsf{w}^\theta)\psi(\mathsf{w}^\mu)+\lambda(\mathsf{w}^\theta)\sigma_\varphi(\mathsf{w}^\mu) \\
& = & {\mathsf w}^{\varphi^{-1}(\theta)}\psi(\mathsf{w}^\mu)+\lambda(\mathsf{w}^\theta){\mathsf w}^{\varphi^{-1}(\mu)} \\
& = & (\chi_{\varphi^{-1}(\theta)}(\alpha)\pi_\alpha(\psi(\mathsf{w}^\mu))+\pi_\alpha(\lambda(\mathsf{w}^\theta))
\chi_{\varphi^{-1}(\mu)}(\alpha))_{\alpha<\tau}
\end{eqnarray*} 
hence
\[\forall\theta\neq\mu\:\: \forall\alpha<\tau\:\: \chi_{\varphi^{-1}(\theta)}(\alpha)\pi_\alpha(\psi(\mathsf{w}^\mu))+\pi_\alpha(\lambda(\mathsf{w}^\theta))
\chi_{\varphi^{-1}(\mu)}(\alpha)=0\]
in particular
\[\forall\alpha<\tau\:\:\forall\theta\neq\varphi(\alpha)\:\:
\chi_{\varphi^{-1}(\theta)}(\alpha)\pi_\alpha(\psi(\mathsf{w}^{\varphi(\alpha)}))+\pi_\alpha(\lambda(\mathsf{w}^\theta))
\chi_{\varphi^{-1}(\varphi(\alpha))}(\alpha)=0\]      
\[\forall\alpha<\tau\:\: \forall\mu\neq\varphi(\alpha)\:\: \chi_{\varphi^{-1}(\varphi(\alpha))}(\alpha)\pi_\alpha(\psi(\mathsf{w}^\mu))+\pi_\alpha(\lambda(\mathsf{w}^{\varphi(\alpha)}))
\chi_{\varphi^{-1}(\mu)}(\alpha)=0\]
i,e.,
\[\forall\alpha<\tau\:\:\forall\theta\neq\varphi(\alpha)\:\:
\pi_\alpha(\psi(\mathsf{w}^\theta))=\pi_\alpha(\lambda(\mathsf{w}^\theta))=0\]
Also:
\begin{eqnarray*}
(\forall x\in\ell^p(\tau)\:\: \sigma_\varphi(x^2)&=&\sigma_\varphi(x)\psi(x)+\lambda(x)\sigma_\varphi(x))) \\
& \Rightarrow & (\forall\beta<\tau\:\:
 \sigma_\varphi(({\mathsf w}^\beta)^2)=\sigma_\varphi({\mathsf w}^\beta)(\psi({\mathsf w}^\beta)+\lambda({\mathsf w}^\beta))) \\
 & \Rightarrow & (\forall\beta<\tau\:\: \sigma_\varphi({\mathsf w}^\beta)=\sigma_\varphi({\mathsf w}^\beta)(\psi({\mathsf w}^\beta)+\lambda({\mathsf w}^\beta))) \\
 & \Rightarrow & (\forall\beta<\tau\:\: {\mathsf w}^{\varphi^{-1}(\beta)}={\mathsf w}^{\varphi^{-1}(\beta)}(\psi({\mathsf w}^\beta)+\lambda({\mathsf w}^\beta))) \\
 & \Rightarrow & (\forall\beta<\tau\:\: \forall\alpha\in\varphi^{-1}(\beta) \:\:
 \pi_{\alpha}(\psi({\mathsf w}^\beta))+\pi_\alpha(\lambda({\mathsf w}^\beta))=1)
\end{eqnarray*}
\end{proof}
\begin{theorem}\label{amin20}
Consider operators $\psi,\lambda$, the generalized shift
$\sigma_\varphi\restriction_{\ell^p(\tau)}:\ell^p(\tau)\to\ell^p(\tau)$ is   a
$(\psi,\lambda)-$derivation if and only if:
\begin{center}
\begin{tabular}{cc} 
\begin{tabular}{c}
there exists $(r_\alpha)_{\alpha<\tau}\in{\mathbb C}^\tau$
with $\psi((x_\alpha)_{\alpha<\tau})=(r_\alpha x_{\varphi(\alpha)})_{\alpha<\tau}$ \\
and $\lambda((x_\alpha)_{\alpha<\tau})=((1-r_\alpha) x_{\varphi(\alpha)})_{\alpha<\tau}$
for all $(x_\alpha)_{\alpha<\tau}\in\ell^p(\tau)$  \\
 \end{tabular} & $(*)$ \\
\end{tabular}
\end{center}
\end{theorem}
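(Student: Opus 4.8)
The plan is to prove Theorem~\ref{amin20} by establishing both implications, using Lemma~\ref{amin10} for the forward direction and a direct computation for the converse. For the forward direction, suppose $\sigma_\varphi\restriction_{\ell^p(\tau)}$ is a $(\psi,\lambda)$-derivation. By Lemma~\ref{amin10} there is $\mathsf{r}=(r_\alpha)_{\alpha<\tau}\in\mathbb{C}^\tau$ with $\psi(\mathsf{w}^\beta)=\mathsf{r}\mathsf{w}^{\varphi^{-1}(\beta)}$ and $\lambda(\mathsf{w}^\beta)=((1)_{\alpha<\tau}-\mathsf{r})\mathsf{w}^{\varphi^{-1}(\beta)}$ for every $\beta<\tau$. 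Unwinding the notation, $\mathsf{w}^{\varphi^{-1}(\beta)}$ has $\alpha$-th coordinate $1$ exactly when $\varphi(\alpha)=\beta$, so $\pi_\alpha(\psi(\mathsf{w}^\beta))=r_\alpha$ if $\varphi(\alpha)=\beta$ and $0$ otherwise; that is, $\psi(\mathsf{w}^\beta)=(r_\alpha\,\pi_\alpha(\sigma_\varphi(\mathsf{w}^\beta)))_{\alpha<\tau}$. For finitely supported vectors $x=\sum_{\beta} x_\beta \mathsf{w}^\beta$, linearity gives $\psi(x)=(r_\alpha x_{\varphi(\alpha)})_{\alpha<\tau}$, and similarly $\lambda(x)=((1-r_\alpha)x_{\varphi(\alpha)})_{\alpha<\tau}$. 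Since $\psi$ and $\lambda$ are operators (continuous) and the finitely supported vectors are dense in $\ell^p(\tau)$ for $p\in[1,\infty)$, the formula $(*)$ extends to all of $\ell^p(\tau)$ by continuity.

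For the converse, assume $(*)$ holds; I would simply verify the $(\psi,\lambda)$-derivation identity coordinatewise. Fix $x=(x_\alpha)_{\alpha<\tau},y=(y_\alpha)_{\alpha<\tau}\in\ell^p(\tau)$ and compute the $\alpha$-th coordinate of each side of $\sigma_\varphi(xy)=\sigma_\varphi(x)\psi(y)+\lambda(x)\sigma_\varphi(y)$. The left side has $\alpha$-th coordinate $x_{\varphi(\alpha)}y_{\varphi(\alpha)}$, while the right side has $\alpha$-th coordinate $x_{\varphi(\alpha)}\cdot r_\alpha y_{\varphi(\alpha)} + (1-r_\alpha)x_{\varphi(\alpha)}\cdot y_{\varphi(\alpha)} = x_{\varphi(\alpha)}y_{\varphi(\alpha)}$, so the two agree. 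One also needs to note that $\psi$ and $\lambda$ as defined are genuinely operators on $\ell^p(\tau)$: linearity is clear, and boundedness follows since $\psi = \mathsf{r}\,\sigma_\varphi\restriction_{\ell^p(\tau)}$ with $\mathsf{r}=(r_\alpha)_{\alpha<\tau}$ bounded (which one may assume, or deduce from $\psi$ being required to be an operator) together with the continuity of $\sigma_\varphi\restriction_{\ell^p(\tau)}$ from the Remark; multiplication by a bounded sequence is bounded on $\ell^p(\tau)$.

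The one subtlety, and the place I expect to spend the most care, is the $p=\infty$ case, where finitely supported vectors are \emph{not} dense in $\ell^\infty(\tau)$, so the density argument in the forward direction must be replaced. The fix is to test the $(\psi,\lambda)$-derivation identity not only on the $\mathsf{w}^\beta$ but on products involving a general $x\in\ell^\infty(\tau)$: for each fixed $\beta<\tau$, evaluating $\sigma_\varphi(x\mathsf{w}^\beta)=\sigma_\varphi(x)\psi(\mathsf{w}^\beta)+\lambda(x)\sigma_\varphi(\mathsf{w}^\beta)$ at coordinate $\alpha$ with $\varphi(\alpha)=\beta$ pins down $\pi_\alpha(\psi(x))$ and $\pi_\alpha(\lambda(x))$ directly in terms of $x_{\varphi(\alpha)}$ and the already-determined $r_\alpha$, without any limiting process. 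Carrying out this coordinate bookkeeping — and checking that the resulting $\mathsf{r}$ must be bounded because $\psi$ is a bounded operator (apply $\psi$ to a suitable unit vector or to $\mathsf{w}^{\varphi(\alpha)}$ and read off $\|\psi\|\ge|r_\alpha|$) — completes the argument uniformly in $p$.
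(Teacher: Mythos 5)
Your proposal is correct, and for $p\in[1,\infty)$ it is essentially the paper's argument: invoke Lemma~\ref{amin10} to get the formula for $\psi$ and $\lambda$ on the vectors $\mathsf{w}^\beta$, extend by linearity to the finitely supported vectors, extend by continuity using density, and verify the converse by the obvious coordinatewise computation. Where you genuinely diverge is the case $p=\infty$. The paper runs the same density argument for all $p$, justifying it by saying that $\mathcal{D}$ is dense for the topology of pointwise convergence and that norm-continuous operators remain continuous for that topology --- a step that is not literally valid as stated (weakening the topology on the domain does not preserve continuity), and which in any case is needed precisely because $\mathcal{D}$ is not norm-dense in $\ell^\infty(\tau)$. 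Your substitute --- evaluating the identities $\sigma_\varphi(x\mathsf{w}^\beta)=\sigma_\varphi(x)\psi(\mathsf{w}^\beta)+\lambda(x)\sigma_\varphi(\mathsf{w}^\beta)$ and $\sigma_\varphi(\mathsf{w}^\beta x)=\sigma_\varphi(\mathsf{w}^\beta)\psi(x)+\lambda(\mathsf{w}^\beta)\sigma_\varphi(x)$ at a coordinate $\alpha$ with $\varphi(\alpha)=\beta$ to read off $\pi_\alpha(\lambda(x))=(1-r_\alpha)x_{\varphi(\alpha)}$ and $\pi_\alpha(\psi(x))=r_\alpha x_{\varphi(\alpha)}$ directly --- is sound, and it is worth noting that it works uniformly for every $p$ and uses only linearity of $\psi$ and $\lambda$, so it could replace the density step altogether and would actually yield a slightly stronger statement (no continuity of $\psi,\lambda$ needed in the forward direction). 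The only cosmetic point: in the converse direction the boundedness discussion is unnecessary, since the hypothesis already declares $\psi$ and $\lambda$ to be operators; the derivation identity itself is a purely algebraic coordinatewise check, exactly as in the paper.
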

\begin{proof}
Suppose $\sigma_\varphi\restriction_{\ell^p(\tau)}:\ell^p(\tau)\to\ell^p(\tau)$ is a  
$(\psi,\lambda)-$derivation, let 
\linebreak
${\mathsf r}:=(\pi_\alpha(\psi({\mathsf w}^{\varphi(\alpha)})))_{\alpha<\tau}$.
and $\mathcal{D}:=\{s_1{\mathsf w}^{\beta_1}+\cdots+s_n{\mathsf w}^{\beta_n}:n\geq1,\beta_1,\ldots,\beta_n\in\tau,
s_1,\ldots,s_n\in{\mathbb C}\}$, then $\mathcal D$ is a dense subset of $\ell^p(\tau)$ and for all
$\beta_1,\ldots,\beta_n\in\tau,s_1,\ldots,s_n\in{\mathbb C}$ we have (use Lemma~\ref{amin10}):
\begin{eqnarray*}
\psi(s_1{\mathsf w}^{\beta_1}+\cdots+s_n{\mathsf w}^{\beta_n}) & = & 
s_1\psi({\mathsf w}^{\beta_1})+\cdots+s_n\psi({\mathsf w}^{\beta_n}) \\
& = & s_1\mathsf{r}{\mathsf w}^{\varphi^{-1}(\beta_1)}+\cdots+s_n\mathsf{r}{\mathsf w}^{\varphi^{-1}(\beta_n)} \\
& = & s_1\mathsf{r}\sigma_\varphi({\mathsf w}^{\beta_1})+\cdots+s_n\mathsf{r}\sigma_\varphi({\mathsf w}^{\beta_n}) \\
& = & \mathsf{r}\sigma_\varphi(s_1{\mathsf w}^{\beta_1}+\cdots+s_n{\mathsf w}^{\beta_n})
\end{eqnarray*}
therefore $\psi(x)=\mathsf{r}\sigma_\varphi(x)$ for all $x\in\mathcal{D}$. Since $\mathcal D$ is a dense
subset of $\ell^p(\tau)$ equipped with pointwise convergence, and $\sigma_\varphi\restriction_{\ell^p(\tau)}$ and $\psi$
is continuous on $\ell^p(\tau)$ equipped with metric induced by $||\:||_p$, hence they are continuous
with pointwise convergence topology, we have $\psi(x)=\mathsf{r}\sigma_\varphi(x)$ for all $x\in\ell^p(\tau)$.
Using a similar method we have $\lambda(x)=((1)_{\alpha<\tau}-\mathsf{r})\sigma_\varphi(x)$ for all $x\in\mathcal{D}$
therefore $\lambda(x)=((1)_{\alpha<\tau}-\mathsf{r})\sigma_\varphi(x)$ for all $x\in\ell^p(\tau)$.
Hence $\mathsf{r},\psi,\lambda$ satisfy $(*)$.
\\
On the other hand if there exists $(r_\alpha)_{\alpha<\tau}=\mathsf{r}\in{\mathbb C}^\tau,\psi,\lambda$ satisfy $(*)$,
then for all $x,y\in\ell^p(\tau)$ we have:
\begin{eqnarray*}
\psi(x)\sigma_\varphi(y)+\sigma_\varphi(x)\lambda(y)
	& = & \mathsf{r}\sigma_\varphi(x)\sigma_\varphi(y)+\sigma_\varphi(x)((1)_{\alpha<\tau}-\mathsf{r})\sigma_\varphi(y) \\
& = & (\mathsf{r}+(1)_{\alpha<\tau}-\mathsf{r})\sigma_\varphi(x)\sigma_\varphi(y) \\
& = & (1)_{\alpha<\tau}\sigma_\varphi(x)\sigma_\varphi(y)=\sigma_\varphi(x)\sigma_\varphi(y)=\sigma_\varphi(xy)
\end{eqnarray*}
and $\sigma_\varphi\restriction_{\ell^p(\tau)}$ is a $(\psi,\lambda)-$derivation.
\end{proof}
\begin{corollary}\label{amin30}
For linear mapping $\psi$,
if $\sigma_\varphi\restriction_{\ell^p(\tau)}:\ell^p(\tau)\to\ell^p(\tau)$ is a  
$\psi-$derivation, then for all $\beta<\tau$ we have $\psi({\mathsf w}^\beta)=\frac12\sigma_\varphi({\mathsf w}^\beta)$.
\\
Thus for operator $\psi:\ell^p(\tau)\to\ell^p(\tau)$, $\sigma_\varphi\restriction_{\ell^p(\tau)}:\ell^p(\tau)\to\ell^p(\tau)$ is a  
$\psi-$derivation if and only if $\psi=\frac12\sigma_\varphi\restriction_{\ell^p(\tau)}$.
\end{corollary}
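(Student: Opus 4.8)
The plan is to read the statement off directly from Lemma~\ref{amin10} and Theorem~\ref{amin20} by specializing $\lambda=\psi$, since a $\psi$-derivation is by definition a $(\psi,\psi)$-derivation.

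First I would handle the assertion about the generators ${\mathsf w}^\beta$. Suppose $\sigma_\varphi\restriction_{\ell^p(\tau)}:\ell^p(\tau)\to\ell^p(\tau)$ is a $\psi$-derivation, i.e.\ a $(\psi,\psi)$-derivation. Applying Lemma~\ref{amin10} with $\lambda=\psi$ gives, for all $\alpha,\beta<\tau$, that $\pi_\alpha(\psi({\mathsf w}^\beta))=0$ when $\varphi(\alpha)\neq\beta$, and $2\pi_\alpha(\psi({\mathsf w}^\beta))=\pi_\alpha(\psi({\mathsf w}^\beta))+\pi_\alpha(\lambda({\mathsf w}^\beta))=1$ when $\varphi(\alpha)=\beta$. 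Hence $\pi_\alpha(\psi({\mathsf w}^\beta))=\tfrac12\chi_{\varphi^{-1}(\beta)}(\alpha)$ for every $\alpha<\tau$, that is $\psi({\mathsf w}^\beta)=\tfrac12{\mathsf w}^{\varphi^{-1}(\beta)}=\tfrac12\sigma_\varphi({\mathsf w}^\beta)$, which is the claimed identity.

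Next I would establish the ``thus'' part for an operator $\psi$. If $\sigma_\varphi\restriction_{\ell^p(\tau)}$ is a $\psi$-derivation, then it is a $(\psi,\psi)$-derivation, so Theorem~\ref{amin20} supplies $(r_\alpha)_{\alpha<\tau}\in{\mathbb C}^\tau$ with $\psi((x_\alpha)_{\alpha<\tau})=(r_\alpha x_{\varphi(\alpha)})_{\alpha<\tau}$ and simultaneously $\psi((x_\alpha)_{\alpha<\tau})=((1-r_\alpha) x_{\varphi(\alpha)})_{\alpha<\tau}$ for all $(x_\alpha)_{\alpha<\tau}\in\ell^p(\tau)$. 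Evaluating both expressions at ${\mathsf w}^{\varphi(\alpha)}$ and comparing the $\alpha$-th coordinates forces $r_\alpha=1-r_\alpha$, so $r_\alpha=\tfrac12$ for every $\alpha<\tau$; thus $\psi=\tfrac12\sigma_\varphi\restriction_{\ell^p(\tau)}$. Conversely, $\psi=\tfrac12\sigma_\varphi\restriction_{\ell^p(\tau)}$ corresponds to the choice ${\mathsf r}=\tfrac12(1)_{\alpha<\tau}$, for which $(1)_{\alpha<\tau}-{\mathsf r}={\mathsf r}$, so by Theorem~\ref{amin20} the pair $(\psi,\psi)$ makes $\sigma_\varphi\restriction_{\ell^p(\tau)}$ a $(\psi,\psi)$-derivation, i.e.\ a $\psi$-derivation, which closes the equivalence.

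I do not expect a real obstacle here; the only point needing a little care is that one must invoke the ``diagonal'' relation $\varphi(\alpha)=\beta$ with $\beta=\varphi(\alpha)$ — equivalently, that every $\alpha<\tau$ lies in $\varphi^{-1}(\varphi(\alpha))$ — in order to pin down $r_\alpha$ for \emph{all} indices rather than only on the image of $\varphi$. The passage from the dense subset $\mathcal D$ to all of $\ell^p(\tau)$ via continuity has already been carried out inside the proof of Theorem~\ref{amin20}, so nothing further is required.
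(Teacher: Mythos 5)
Your proposal is correct and follows exactly the route the paper intends: its proof of this corollary is literally ``Use Lemma~\ref{amin10} and Theorem~\ref{amin20},'' and you have simply filled in the specialization $\lambda=\psi$, which forces $2\pi_\alpha(\psi({\mathsf w}^\beta))=1$ on $\varphi(\alpha)=\beta$ and $r_\alpha=1-r_\alpha$ throughout. Your remark that one needs $\alpha\in\varphi^{-1}(\varphi(\alpha))$ to pin down $r_\alpha$ at every index is exactly the right point of care.
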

\begin{proof}
Use Lemma~\ref{amin10} and Theorem~\ref{amin20}.
\end{proof}
\begin{corollary}\label{amin40}
The generalized shift $\sigma_\varphi\restriction_{\ell^p(\tau)}:\ell^p(\tau)\to\ell^p(\tau)$ is not
a (Jordan, Jordan triple) derivation.
\end{corollary}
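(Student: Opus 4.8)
The plan is to exploit that the product on $\ell^p(\tau)$ is commutative together with the idempotency of the vectors ${\mathsf w}^\beta$. I would first record the elementary fact that on a commutative algebra every Jordan derivation, and even every Jordan triple derivation, annihilates each idempotent. Indeed, if $d$ is a Jordan derivation and $e=e^2$, then $d(e)=d(e^2)=d(e)e+ed(e)=2ed(e)$; multiplying this identity by $e$ and using $e^2=e$ gives $ed(e)=2ed(e)$, hence $ed(e)=0$ and so $d(e)=2ed(e)=0$. The same computation applied to $d(e)=d(eee)=d(e)ee+ed(e)e+eed(e)=3ed(e)$ covers the Jordan triple case and again forces $d(e)=0$.

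Next I would specialize to $d=\sigma_\varphi\restriction_{\ell^p(\tau)}$ and $e={\mathsf w}^\beta$, which is a nonzero idempotent of $\ell^p(\tau)$ for every $\beta<\tau$. From $\sigma_\varphi({\mathsf w}^\beta)=(\chi_{\{\beta\}}(\varphi(\alpha)))_{\alpha<\tau}={\mathsf w}^{\varphi^{-1}(\beta)}$ one sees that $\sigma_\varphi({\mathsf w}^\beta)=0$ exactly when $\varphi^{-1}(\beta)=\emptyset$. Since $\tau$ is a nonzero cardinal, pick $\alpha_0<\tau$ and set $\beta_0:=\varphi(\alpha_0)$; then $\alpha_0\in\varphi^{-1}(\beta_0)$, so $\sigma_\varphi({\mathsf w}^{\beta_0})={\mathsf w}^{\varphi^{-1}(\beta_0)}\neq 0$, contradicting the first paragraph. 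Hence $\sigma_\varphi\restriction_{\ell^p(\tau)}$ is neither a Jordan nor a Jordan triple derivation, for any admissible $\varphi$.

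The only point needing a little care is that when $\tau$ is infinite and $p<\infty$ the algebra $\ell^p(\tau)$ has no identity: I never multiply by a unit, only by the fixed idempotent $e={\mathsf w}^\beta$ inside the algebra, which is always legitimate, and every displayed identity above is a purely algebraic evaluation of the defining relation at $e$ (no continuity or norm estimate enters). As an alternative to the ``annihilates idempotents'' shortcut, one could evaluate the Jordan identity directly at $a={\mathsf w}^{\beta_0}$ (resp.\ the Jordan triple identity at $a=b={\mathsf w}^{\beta_0}$) and compare $\pi_{\alpha_0}$ of the two sides: the left side gives $1$, while the right side gives $2\chi_{\{\beta_0\}}(\alpha_0)\in\{0,2\}$ (resp.\ $3\chi_{\{\beta_0\}}(\alpha_0)\in\{0,3\}$), which is impossible. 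In either approach there is no real obstacle; the entire content is the observation that $\varphi$, having nonempty domain, has nonempty range.
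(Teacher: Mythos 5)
Your proof is correct and rests on the same computation as the paper's: evaluate the Jordan identity at the idempotent ${\mathsf w}^\beta$, obtain $d(e)=2ed(e)$, and derive a contradiction from the fact that $\varphi$ has nonempty range. Two small points of comparison are worth noting. First, you package the computation as a general lemma (on a commutative algebra every Jordan derivation annihilates idempotents), which lets you get away with a single well-chosen $\beta_0=\varphi(\alpha_0)$, whereas the paper runs the argument for all $\theta<\tau$ and concludes $\varphi^{-1}(\theta)=\varnothing$ for every $\theta$ before invoking the contradiction; the content is the same. Second, and more substantively, the paper's written proof only carries out the Jordan case ($d(a^2)=d(a)a+ad(a)$) and leaves the Jordan triple case implicit, while you explicitly verify that the triple identity $d(eee)=3ed(e)$ also forces $d(e)=0$ on idempotents; since a Jordan triple derivation need not a priori be a Jordan derivation, your version actually closes a small gap in the paper's exposition. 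Your cautionary remark about the absence of a unit in $\ell^p(\tau)$ for infinite $\tau$ and $p<\infty$ is apt and correctly handled, since you only ever multiply by the idempotent itself.
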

\begin{proof}
If $\sigma_\varphi\restriction_{\ell^p(\tau)}:\ell^p(\tau)\to\ell^p(\tau)$ is 
a (Jordan) derivation., then  for $\theta<\tau$ we have:
\begin{eqnarray*}
\{0,1\}^\tau\ni\mathsf{w}^{\varphi^{-1}(\theta)} & = & \sigma_\varphi(\mathsf{w}^{\theta}) =\sigma_\varphi((\mathsf{w}^{\theta})^2)\\
&= &2\mathsf{w}^\theta \sigma_\varphi(\mathsf{w}^{\theta})=2\mathsf{w}^{\varphi^{-1}(\theta)\cap\{\theta\}}\in\{0,2\}^\tau
\end{eqnarray*}
hence $\mathsf{w}^{\varphi^{-1}(\theta)}\in\{(0)_{\alpha<\tau}\}=\{0,1\}^\tau\cap\{0,2\}^\tau$ and $\varphi^{-1}(\theta)=
\varnothing$ for all $\theta<\tau$, 
which is a contradiction.
\end{proof}
\begin{lemma}\label{amin44}
If the linear mapping $d$,
is a $\sigma_\varphi\restriction_{\ell^p(\tau)}-$derivation, then $d(\mathsf{w}^\beta)=0$
for all $\beta<\tau$.
\end{lemma}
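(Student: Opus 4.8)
The plan is to use the single structural fact that in the Banach algebra $\ell^p(\tau)$, whose product is pointwise, every basis vector $\mathsf{w}^\beta$ is an idempotent, $(\mathsf{w}^\beta)^2=\mathsf{w}^\beta$, together with the identity $\sigma_\varphi(\mathsf{w}^\beta)=\mathsf{w}^{\varphi^{-1}(\beta)}$ already used in the proofs above.

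First I would apply the defining identity of a $\sigma_\varphi\restriction_{\ell^p(\tau)}$-derivation, namely $d(xy)=d(x)\sigma_\varphi(y)+\sigma_\varphi(x)d(y)$, to the pair $x=y=\mathsf{w}^\beta$, obtaining
\[
d(\mathsf{w}^\beta)=d\big((\mathsf{w}^\beta)^2\big)=d(\mathsf{w}^\beta)\sigma_\varphi(\mathsf{w}^\beta)+\sigma_\varphi(\mathsf{w}^\beta)d(\mathsf{w}^\beta)=2\,\mathsf{w}^{\varphi^{-1}(\beta)}d(\mathsf{w}^\beta),
\]
where commutativity of the pointwise product is used in the last equality. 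Reading this equality coordinatewise gives $\pi_\alpha(d(\mathsf{w}^\beta))=2\chi_{\varphi^{-1}(\beta)}(\alpha)\,\pi_\alpha(d(\mathsf{w}^\beta))$ for every $\alpha<\tau$. If $\alpha\notin\varphi^{-1}(\beta)$ the factor $\chi_{\varphi^{-1}(\beta)}(\alpha)$ vanishes, hence $\pi_\alpha(d(\mathsf{w}^\beta))=0$; if $\alpha\in\varphi^{-1}(\beta)$ the equality reads $\pi_\alpha(d(\mathsf{w}^\beta))=2\pi_\alpha(d(\mathsf{w}^\beta))$, which again forces $\pi_\alpha(d(\mathsf{w}^\beta))=0$. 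Thus $d(\mathsf{w}^\beta)=0$ for all $\beta<\tau$.

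This argument is elementary and I do not foresee a real obstacle; the only points needing care are that $\mathsf{w}^\beta\in\ell^p(\tau)$ (it is the indicator of a singleton, of norm $1$) and that $\sigma_\varphi(\mathsf{w}^\beta)=\mathsf{w}^{\varphi^{-1}(\beta)}$ again lies in $\ell^p(\tau)$, which is ensured by the standing hypothesis $\sigma_\varphi(\ell^p(\tau))\subseteq\ell^p(\tau)$. The intended downstream consequence --- that an \emph{operator} $d$ which is a $\sigma_\varphi\restriction_{\ell^p(\tau)}$-derivation is forced to be $0$ --- then follows at once: by linearity $d$ vanishes on the dense subspace $\mathcal D$ of finitely supported sequences, and by continuity of $d$ it vanishes on all of $\ell^p(\tau)$, in the spirit of the density argument in the proof of Theorem~\ref{amin20}.
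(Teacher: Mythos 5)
Your proof is correct and follows essentially the same route as the paper's: apply the $\sigma_\varphi$-derivation identity to $(\mathsf{w}^\beta)^2=\mathsf{w}^\beta$ to get $d(\mathsf{w}^\beta)=2\mathsf{w}^{\varphi^{-1}(\beta)}d(\mathsf{w}^\beta)$, then conclude $d(\mathsf{w}^\beta)=0$ because each coordinate factor $1-2\chi_{\varphi^{-1}(\beta)}(\alpha)$ is nonzero. Your coordinatewise case analysis is just an unpacking of the paper's observation that $(1)_{\alpha<\tau}-2\mathsf{w}^{\varphi^{-1}(\beta)}$ has no zero entries.
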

\begin{proof}
For $\beta<\tau$ we have:
\[d(\mathsf{w}^\beta)=d((\mathsf{w}^\beta)^2)=d(\mathsf{w}^\beta)\sigma_\varphi(\mathsf{w}^\beta)=2d(\mathsf{w}^\beta)
\mathsf{w}^{\varphi^{-1}(\beta)}\]
Hence $d(\mathsf{w}^\beta)((1)_{\alpha<\tau}-2\mathsf{w}^{\varphi^{-1}(\beta)})=0$
which leads to $d(\mathsf{w}^\beta)=0$
(note that
\linebreak 
$(1)_{\alpha<\tau}-2\mathsf{w}^{\varphi^{-1}(\beta)}\in(\mathbb{C}\setminus\{0\})^\tau$).
\end{proof}
\begin{theorem}\label{amin48}
If the operator $d$
is a $\sigma_\varphi\restriction_{\ell^p(\tau)}-$derivation, then $d=0$  (see \cite{zero} too).
\end{theorem}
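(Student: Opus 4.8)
The plan is to push the idempotent computation of Lemma~\ref{amin44} from singletons $\mathsf{w}^\beta$ to characteristic functions $\mathsf{w}^E$ of arbitrary subsets $E\subseteq\tau$, and then finish by density and continuity; organising it this way makes the argument uniform over all $p\in[1,\infty]$.

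First I would prove the following strengthening of Lemma~\ref{amin44}: if $d$ is a $\sigma_\varphi\restriction_{\ell^p(\tau)}-$derivation then $d(\mathsf{w}^E)=0$ for every $E\subseteq\tau$ with $\mathsf{w}^E\in\ell^p(\tau)$ --- that is, for every finite $E$ when $p<\infty$, and for every $E$ when $p=\infty$. Since $(\mathsf{w}^E)^2=\mathsf{w}^E$ and $\sigma_\varphi(\mathsf{w}^E)=\mathsf{w}^{\varphi^{-1}(E)}$, the derivation identity applied to $a=b=\mathsf{w}^E$ gives
\[d(\mathsf{w}^E)=d\bigl((\mathsf{w}^E)^2\bigr)=d(\mathsf{w}^E)\,\sigma_\varphi(\mathsf{w}^E)+\sigma_\varphi(\mathsf{w}^E)\,d(\mathsf{w}^E)=2\,d(\mathsf{w}^E)\,\mathsf{w}^{\varphi^{-1}(E)},\]
so $d(\mathsf{w}^E)\bigl((1)_{\alpha<\tau}-2\mathsf{w}^{\varphi^{-1}(E)}\bigr)=0$. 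Every coordinate of $(1)_{\alpha<\tau}-2\mathsf{w}^{\varphi^{-1}(E)}$ lies in $\{-1,1\}$, so coordinatewise multiplication by this element is injective on ${\mathbb C}^\tau$; hence $d(\mathsf{w}^E)=0$, exactly as in Lemma~\ref{amin44}.

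By linearity $d$ then vanishes on the linear span $\mathcal S$ of $\{\mathsf{w}^E:\mathsf{w}^E\in\ell^p(\tau)\}$, which is the space of finitely supported sequences when $p<\infty$ and the space of finitely-valued (``simple'') sequences when $p=\infty$. In either case $\mathcal S$ is dense in $\ell^p(\tau)$ for $||\cdot||_p$: when $p<\infty$ the finite truncations of any $x\in\ell^p(\tau)$ converge to $x$ in norm, and when $p=\infty$ every element of $\ell^\infty(\tau)$ is a uniform limit of simple functions (cover its bounded range by finitely many sets of small diameter). Since $d$ is an operator, hence norm-continuous, and it kills the dense subspace $\mathcal S$, we conclude $d=0$, which finishes the proof.

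The only delicate point --- and the reason for working with all $\mathsf{w}^E$ rather than just the singletons $\mathsf{w}^\beta$ of Lemma~\ref{amin44} --- is the case $p=\infty$, where the span of the $\mathsf{w}^\beta$ is dense only in $c_0(\tau)$, not in $\ell^\infty(\tau)$. Enlarging to characteristic functions of arbitrary subsets costs nothing, since $\ell^\infty(\tau)$ is closed under $\sigma_\varphi$ and the idempotent computation never leaves $\ell^\infty(\tau)$, and it repairs the density step because simple functions are norm-dense in $\ell^\infty(\tau)$. The remaining verifications are routine.
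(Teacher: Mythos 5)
Your argument is correct, and it is in fact more careful than what the paper does. The paper's proof of Theorem~\ref{amin48} is literally ``Use Lemma~\ref{amin44}'': it establishes $d(\mathsf{w}^\beta)=0$ for singletons only, and then tacitly invokes linearity, continuity, and density of $\mathrm{span}\{\mathsf{w}^\beta:\beta<\tau\}$ in $\ell^p(\tau)$. That density holds for $p\in[1,\infty)$ but fails for $p=\infty$, where the closed span of the $\mathsf{w}^\beta$ is only $c_0(\tau)\subsetneq\ell^\infty(\tau)$ --- so the paper's one-line proof has a genuine gap in the $p=\infty$ case (the same gap appears in the density step of Theorem~\ref{amin20}, where $\mathcal D$ is asserted to be dense in $\ell^p(\tau)$ for all $p\in[1,\infty]$). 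You repair exactly this: the idempotent computation of Lemma~\ref{amin44} applies verbatim to $\mathsf{w}^E$ for an arbitrary $E\subseteq\tau$, since $\sigma_\varphi(\mathsf{w}^E)=\mathsf{w}^{\varphi^{-1}(E)}$ and every coordinate of $(1)_{\alpha<\tau}-2\mathsf{w}^{\varphi^{-1}(E)}$ is $\pm1$, and the span of all such $\mathsf{w}^E$ is the finitely-valued sequences, which are norm-dense in $\ell^\infty(\tau)$. The cost is essentially zero (one reruns the same two-line computation), and the gain is that the theorem is actually proved for the full range $p\in[1,\infty]$ claimed in the paper's conventions. For $p<\infty$ your argument specializes to the paper's.
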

\begin{proof}
Use Lemma~\ref{amin44}.
\end{proof}
\begin{note}
For $\mathsf{r}\in[0,1]^\tau$ one may consider
operators $\mathsf{r}\sigma_\varphi\restriction_{\ell^p(\tau)},
((1)_{\alpha<\tau}-\mathsf{r})\sigma_\varphi\restriction_{\ell^p(\tau)}:\ell^p(\tau)\to\ell^p(\tau)$ so by 
Theorem~\ref{amin20} for 
$\mathcal{E}:=\{(\psi,\lambda):\psi,\lambda:\ell^p(\tau)\to\ell^p(\tau)$ are operators such that
$\sigma_\varphi\restriction_{\ell^p(\tau)}$ is a $(\psi,\lambda)-$derivation$\}$ we have
\[2^{\aleph_0\tau}=card([0,1]^\tau)\leq card(\mathcal{E})\leq card(\mathbb{C}^\tau)=2^{\aleph_0\tau}\]
which leads to $card(\mathcal{E})=2^{\aleph_0\tau}=2^{\max(\aleph_0,\tau)}$,
\end{note}
\section{Is $\sigma_\varphi\restriction_{\ell^p(\tau)}$ a generalized derivation?}
\noindent Let's recall, in Banach algebra $\mathcal A$ \cite{survey, niknam}:
\begin{itemize}
\item we say linear mapping $D:{\mathcal A}\to{\mathcal A}$ is a
	generalized derivation if there exists a derivation $d:\mathcal{A}\to\mathcal{A}$
	such that $D(ab)=D(a)b+ad(b)$ for all $a,b\in\mathcal{A}$,
\item we say linear mapping $D:{\mathcal A}\to{\mathcal A}$ is a
	generalized Jordan derivation if there exists a derivation $d:\mathcal{A}\to\mathcal{A}$
	such that $D(a^2)=D(a)a+ad(a)$ for all $a\in\mathcal{A}$, , thus any generalized derivation is a
	generalized Jordan derivation,
\item we say linear mapping $D:{\mathcal A}\to{\mathcal A}$ is a
	generalized Jordan triple derivation if there exists a Jordan triple derivation $d:\mathcal{A}\to\mathcal{A}$
	such that $D(aba)=D(a)ba+ad(b)a+abd(a)$ for all $a,b\in\mathcal{A}$, 
\end{itemize}
In this section we prove that 
$\sigma_\varphi\restriction_{\ell^p(\tau)}:\ell^p(\tau)\to\ell^p(\tau)$ is a generalized (Jordan, Jordan triple) derivation if and only
if $\varphi=id_\tau$.
\begin{lemma}\label{amin50}
Consider derivation $d$.
If $\sigma_\varphi(x^2)=\sigma_\varphi(x)x+xd(x)$ for all
$x\in\ell^p(\tau)$, then $\varphi=id_\tau$ and $\sigma_\varphi\restriction_{\ell^p(\tau)}=id_{\ell^p(\tau)}$.
\end{lemma}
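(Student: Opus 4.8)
The plan is to test the hypothesis on the idempotents $\mathsf{w}^\beta$, $\beta<\tau$. Each $\mathsf{w}^\beta$ is finitely supported, hence lies in $\ell^p(\tau)$, and satisfies $(\mathsf{w}^\beta)^2=\mathsf{w}^\beta$; moreover $\sigma_\varphi(\mathsf{w}^\beta)=\mathsf{w}^{\varphi^{-1}(\beta)}$. Substituting $x=\mathsf{w}^\beta$ into the assumed identity $\sigma_\varphi(x^2)=\sigma_\varphi(x)x+xd(x)$ yields
\[\mathsf{w}^{\varphi^{-1}(\beta)}=\mathsf{w}^{\varphi^{-1}(\beta)}\mathsf{w}^\beta+\mathsf{w}^\beta d(\mathsf{w}^\beta)=\mathsf{w}^{\varphi^{-1}(\beta)\cap\{\beta\}}+\mathsf{w}^\beta d(\mathsf{w}^\beta).\]

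First I would kill the last term. Since $d$ is a genuine derivation and $\mathsf{w}^\beta$ is idempotent, $d(\mathsf{w}^\beta)=d((\mathsf{w}^\beta)^2)=2\mathsf{w}^\beta d(\mathsf{w}^\beta)$, so $d(\mathsf{w}^\beta)\bigl((1)_{\alpha<\tau}-2\mathsf{w}^\beta\bigr)=0$; as every coordinate of $(1)_{\alpha<\tau}-2\mathsf{w}^\beta$ lies in $\{-1,1\}$, multiplying it coordinatewise against $d(\mathsf{w}^\beta)$ and getting zero forces $d(\mathsf{w}^\beta)=0$. This is exactly the argument of Lemma~\ref{amin44} with $\sigma_\varphi\restriction_{\ell^p(\tau)}$ replaced by $d$; in particular $\mathsf{w}^\beta d(\mathsf{w}^\beta)=0$.

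Then the displayed identity collapses to $\mathsf{w}^{\varphi^{-1}(\beta)}=\mathsf{w}^{\varphi^{-1}(\beta)\cap\{\beta\}}$, i.e. $\varphi^{-1}(\beta)=\varphi^{-1}(\beta)\cap\{\beta\}$, hence $\varphi^{-1}(\beta)\subseteq\{\beta\}$ for every $\beta<\tau$. Fixing an arbitrary $\alpha<\tau$ and taking $\beta=\varphi(\alpha)$, we get $\alpha\in\varphi^{-1}(\varphi(\alpha))\subseteq\{\varphi(\alpha)\}$, so $\alpha=\varphi(\alpha)$. Thus $\varphi=id_\tau$, whence $\sigma_\varphi=id_{{\mathbb C}^\tau}$ and $\sigma_\varphi\restriction_{\ell^p(\tau)}=id_{\ell^p(\tau)}$.

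There is essentially no serious obstacle here; the only point requiring care is the vanishing of $d$ on the $\mathsf{w}^\beta$, which reuses the idempotent trick of Lemma~\ref{amin44} and relies solely on $\mathsf{w}^\beta$ being an idempotent and $(1)_{\alpha<\tau}-2\mathsf{w}^\beta$ having no zero coordinate. One could alternatively observe that $\ell^p(\tau)$ is a commutative semisimple Banach algebra, so $d=0$ outright, but the elementary computation above is self-contained and suffices.
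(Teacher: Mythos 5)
Your proposal is correct and follows essentially the same route as the paper: evaluate the hypothesis at the idempotents $\mathsf{w}^\beta$, use the derivation identity $d(\mathsf{w}^\beta)=2\mathsf{w}^\beta d(\mathsf{w}^\beta)$ to force $d(\mathsf{w}^\beta)=0$, and conclude $\varphi^{-1}(\beta)\subseteq\{\beta\}$, hence $\varphi=id_\tau$. The only cosmetic difference is the order of steps (the paper first multiplies the displayed identity by $\mathsf{w}^\beta$ to get $\mathsf{w}^\beta d(\mathsf{w}^\beta)=0$ and then invokes the derivation property, whereas you kill $d(\mathsf{w}^\beta)$ up front), which changes nothing of substance.
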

\begin{proof}
For $\beta<\tau$ we have:
\begin{eqnarray*}
{\mathsf w}^{\varphi^{-1}(\beta)} & = & \sigma_\varphi({\mathsf w}^\beta)=\sigma_\varphi(({\mathsf w}^\beta)^2) \\
& = & \sigma_\varphi({\mathsf w}^\beta){\mathsf w}^\beta+{\mathsf w}^\beta d({\mathsf w}^\beta) =
	{\mathsf w}^{\{\beta\}\cap \varphi^{-1}(\beta)}+{\mathsf w}^\beta d({\mathsf w}^\beta)
\end{eqnarray*}
hence $(0)_{\alpha<\tau}={\mathsf w}^\beta({\mathsf w}^{\varphi^{-1}(\beta)}-{\mathsf w}^{\{\beta\}\cap \varphi^{-1}(\beta)}-{\mathsf w}^\beta d({\mathsf w}^\beta))=-{\mathsf w}^\beta d({\mathsf w}^\beta)=\frac{-1}2d({\mathsf w}^\beta)$
which leads to $d({\mathsf w}^\beta)=0$ and ${\mathsf w}^{\varphi^{-1}(\beta)}={\mathsf w}^{\{\beta\}\cap \varphi^{-1}(\beta)}$, therefore:
\[\forall\beta<\tau\:\: \varphi^{-1}(\beta)=\{\beta\}\cap \varphi^{-1}(\beta)\]
in particular $\alpha\in \varphi^{-1}(\varphi(\alpha))=\{\varphi(\alpha)\}\cap \varphi^{-1}(\varphi(\alpha))$
and $\alpha=\varphi(\alpha)$ for all $\alpha<\tau$.
So $\varphi=id_\tau$.
\end{proof}
\begin{theorem}\label{amin60}
The generalized shift
$\sigma_\varphi\restriction_{\ell^p(\tau)}:\ell^p(\tau)\to\ell^p(\tau)$ is a  generalized (Jordan, Jordan triple) derivation if and only
if $\varphi=id_\tau$.
\end{theorem}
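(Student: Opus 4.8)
The plan is to split into the two implications, reducing the forward direction to Lemma~\ref{amin50} wherever possible.

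For the ``if'' direction I would observe that when $\varphi=id_\tau$ we have $\sigma_\varphi\restriction_{\ell^p(\tau)}=id_{\ell^p(\tau)}$, and that the zero map $d=0$ --- which is simultaneously a derivation and a Jordan triple derivation --- witnesses all three ``generalized'' properties: for every $a,b\in\ell^p(\tau)$ one has $id(ab)=ab=ab+a\cdot 0$, $id(a^2)=a^2=a\cdot a+a\cdot 0$, and $id(aba)=aba=aba+a\cdot 0\cdot a+ab\cdot 0$. Hence $id_{\ell^p(\tau)}$ is a generalized (Jordan, Jordan triple) derivation.

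For the ``only if'' direction, suppose first that $\sigma_\varphi\restriction_{\ell^p(\tau)}$ is a generalized derivation; as recalled above it is then a generalized Jordan derivation, so there is a derivation $d$ with $\sigma_\varphi(x^2)=\sigma_\varphi(x)x+xd(x)$ for all $x\in\ell^p(\tau)$. This is precisely the hypothesis of Lemma~\ref{amin50}, so $\varphi=id_\tau$; the same argument settles the generalized Jordan derivation case. It remains to treat the generalized Jordan triple case, which is the only spot calling for a fresh (short) computation. Assume $d$ is a Jordan triple derivation with $\sigma_\varphi(xyx)=\sigma_\varphi(x)yx+xd(y)x+xyd(x)$ for all $x,y\in\ell^p(\tau)$, and substitute $x=y=\mathsf{w}^\beta$. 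Using that $\mathsf{w}^\beta$ is idempotent (so $\mathsf{w}^\beta\mathsf{w}^\beta\mathsf{w}^\beta=\mathsf{w}^\beta$), that $\sigma_\varphi(\mathsf{w}^\beta)=\mathsf{w}^{\varphi^{-1}(\beta)}$, and that products in $\ell^p(\tau)$ are pointwise (hence commutative), the right-hand side collapses to $\mathsf{w}^{\{\beta\}\cap\varphi^{-1}(\beta)}+2\mathsf{w}^\beta d(\mathsf{w}^\beta)$, giving $\mathsf{w}^{\varphi^{-1}(\beta)}=\mathsf{w}^{\{\beta\}\cap\varphi^{-1}(\beta)}+2\mathsf{w}^\beta d(\mathsf{w}^\beta)$. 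Multiplying through by $\mathsf{w}^\beta$ annihilates the first two summands and forces $2\mathsf{w}^\beta d(\mathsf{w}^\beta)=0$, hence $\mathsf{w}^\beta d(\mathsf{w}^\beta)=0$, and therefore $\mathsf{w}^{\varphi^{-1}(\beta)}=\mathsf{w}^{\{\beta\}\cap\varphi^{-1}(\beta)}$, i.e. $\varphi^{-1}(\beta)\subseteq\{\beta\}$ for each $\beta<\tau$. Exactly as at the end of Lemma~\ref{amin50}, $\alpha\in\varphi^{-1}(\varphi(\alpha))\subseteq\{\varphi(\alpha)\}$ yields $\alpha=\varphi(\alpha)$ for all $\alpha<\tau$, so $\varphi=id_\tau$.

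I do not anticipate a genuine obstacle here: most of the work is already packaged in Lemma~\ref{amin50}, and the only new ingredient is the idempotent computation for the Jordan triple case, whose sole mild subtlety is verifying that the two $d$-terms coincide and that the $d$-free terms reduce to $\mathsf{w}^{\{\beta\}\cap\varphi^{-1}(\beta)}$ --- routine, since $\ell^p(\tau)$ is commutative and each $\mathsf{w}^\beta$ is idempotent. (Notably this computation never uses that $d$ is a Jordan triple derivation, only that it is some linear self-map of $\ell^p(\tau)$.)
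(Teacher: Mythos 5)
Your proof is correct, and its core is the same as the paper's: reduce the ``only if'' direction to Lemma~\ref{amin50}. The one place where you go beyond the paper's one-line proof is the generalized Jordan triple case, and that addition is genuinely needed: Lemma~\ref{amin50} assumes the square identity $\sigma_\varphi(x^2)=\sigma_\varphi(x)x+xd(x)$, which does not follow formally from the triple identity $\sigma_\varphi(xyx)=\sigma_\varphi(x)yx+xd(y)x+xyd(x)$ since $\ell^p(\tau)$ has no unit for $p<\infty$; your substitution $x=y=\mathsf{w}^\beta$ followed by multiplication with $\mathsf{w}^\beta$ supplies exactly the missing computation (and, as you note, uses only linearity of $d$). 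You also make the ``if'' direction explicit via $d=0$, which the paper leaves tacit. One wording quibble: multiplying by $\mathsf{w}^\beta$ does not ``annihilate'' the terms $\mathsf{w}^{\varphi^{-1}(\beta)}$ and $\mathsf{w}^{\{\beta\}\cap\varphi^{-1}(\beta)}$ --- it sends both to $\mathsf{w}^{\{\beta\}\cap\varphi^{-1}(\beta)}$ so that they cancel against each other --- but the resulting identity $2\mathsf{w}^\beta d(\mathsf{w}^\beta)=0$ and everything downstream of it is right.
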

\begin{proof}
Use Lemma~\ref{amin50}.
\end{proof}
\begin{note}
In Banach Algebra $\mathcal A$,
we call the sequence of linear mappings 
	$\{d_n\}_{n\geq0}$ a higher derivation (resp. Jordan higher derivation, Jordan triple higher derivation) if $d_n(xy)=\mathop{\sum}\limits_{i+j=n}
	d_i(x)d_{j}(y)$ (rep. $d_n(x^2)=\mathop{\sum}\limits_{i+j=n}
	d_i(x)d_j(x)$, $d_n(xyx)=\mathop{\sum}\limits_{i+j+k=n}
	d_i(x)d_j(y)d_k(x)$)	
	for all $x,y\in\mathcal{A}$ and $n\geq0$  \cite{jordan, survey}.
Using Lemma~\ref{amin44} and Theorem~\ref{amin48} for sequence $\{d_n\}_{n\geq0}$ of operators:
\\
(1) If $\{d_n\}_{n\geq0}$ is a (Jordan, Jordan triple) higher derivation over $\ell^p(\tau)$ with $d_0=\sigma_\varphi\restriction_{\ell^p(\tau)}$,
then $d_1=d_2=\cdots=0$.
\end{note}

\vspace{3mm}
\noindent{\small {\bf Safoura Arzanesh},
Faculty of Mathematics, Statistics and Computer Science,
College of Science, University of Tehran,
Enghelab Ave., Tehran, Iran
\\
({\it e-mail}: arzanesh.parsian@gmail.com)}
\vspace{3mm}
\\
{\small {\bf Fatemah Ayatollah Zadeh Shirazi},
Faculty of Mathematics, Statistics and Computer Science,
College of Science, University of Tehran,
Enghelab Ave., Tehran, Iran
\\
({\it e-mail}: f.a.z.shirazi@ut.ac.ir  ,  fatemah@khayam.ut.ac.ir)}
\vspace{3mm}
\\
{\small {\bf Arezoo Hosseini},
Faculty of Mathematics, College of Science, Farhangian University, Pardis Nasibe--shahid sherafat, Enghelab Ave., Tehran, Iran
\\
({\it e-mail}: a.hosseini@cfu.ac.ir)}
\end{document}